\DeclareFontFamily{U}{euf}{}
\DeclareFontShape{U}{euf}{m}{n}{%
  <5><6><7><8><9>gen*eufm%
  <10><10.95><12><14.4><17.28><20.74><24.88>eufm10%
  }{}
\DeclareFontShape{U}{euf}{b}{n}{%
  <5><6><7><8><9>gen*eufb%
  <10><10.95><12><14.4><17.28><20.74><24.88>eufb10%
  }{}
\DeclareFontFamily{U}{msb}{}
\DeclareFontShape{U}{msb}{m}{n}{%
  <5><6><7><8><9>gen*msbm%
  <10><10.95><12><14.4><17.28><20.74><24.88>msbm10%
  }{}
\DeclareFontFamily{U}{msa}{}
\DeclareFontShape{U}{msa}{m}{n}{%
  <5><6><7><8><9>gen*msam%
  <10><10.95><12><14.4><17.28><20.74><24.88>msam10%
  }{}
\newtheorem{theorem}{Theorem}[section]
\newtheorem{lemma}[theorem]{Lemma}
\newtheorem{corollary}[theorem]{Corollary}
\theoremstyle{definition}
\numberwithin{equation}{section} \frenchspacing
\def\t{\tilde}
\begin{document}

\title[Mizuno-type result and Wallis' formula]{Mizuno-type result and Wallis' formula}

\author{Su Hu}
\address{Department of Mathematics, South China University of Technology, Guangzhou, Guangdong 510640, China}
\email{mahusu@scut.edu.cn}

\author{Min-Soo Kim}
\address{Department of Mathematics Education, Kyungnam University, Changwon, Gyeongnam 51767, Republic of Korea}
\email{mskim@kyungnam.ac.kr}

%\thanks{This work was financially supported by KRF 2003-041-C20009}

\subjclass[2010]{11M35, 33B15}
\keywords{Gamma function, Hurwitz zeta function, Wallis formula.}

\begin{abstract}Let $\t\Gamma(z)$ be the modified gamma function introduced  by the authors in a recent  preprint  ``arXiv2106.14674". 
In this note,
we obtain the following Mizuno-type result: 
\begin{equation*}
\prod_{m=0}^{\infty}\left\{\prod_{j=1}^{n}(m+z_{j})\right\}^{(-1)^{m}}=\frac{\left(\sqrt{\frac{\pi}{2}}\right)^n}{\prod_{j=1}^{n}\tilde\Gamma(z_{j})},
\end{equation*}
which imply a  Kurokawa--Wakayama type formula
\begin{equation*}
\prod_{m=0}^\infty\left((m+x)^{n}-y^n\right)^{(-1)^{m}}
 =\frac{\left(\sqrt{\frac{\pi}{2}}\right)^n}{\prod_{\zeta^{n}=1}\tilde\Gamma(x-\zeta y)}
\end{equation*}
and a Lerch-type formula
\begin{equation*}
\prod_{m=0}^\infty(m+x)^{(-1)^{m}}=\frac{\sqrt{\frac{\pi}{2}}}{\tilde\Gamma(x)}.
\end{equation*}
By setting $x=1$ in the above result, we recover Wallis' 1656 fomula
\begin{equation*}\frac{2\cdot2}{1\cdot 3}\frac{4\cdot4}{3\cdot 5}\frac{6\cdot6}{5\cdot 7}\cdots=\frac{\pi}{2}.
\end{equation*}
\end{abstract}

\maketitle

\def\ord{\text{ord}_p}
\def\ordt{\text{ord}_2}
\def\o{\omega}
\def\la{\langle}
\def\ra{\rangle}
\def\Log{{\rm Log}\, \Gamma_{p,N}^*}
\def\ov{\bar}

\section{Introduction}
\subsection{Lerch's formula} 
Lerch's 1894 formula asserts that 
\begin{equation}\label{Lerch}
\prod_{m=0}^\infty(m+x)=\frac{\sqrt{2\pi}}{\Gamma(x)}
\end{equation}
in the sense of zeta regularization, where $\Gamma(x)$ denotes the Euler gamma function.

Letting $x=1$ in (\ref{Lerch}) we get the following interesting result
\begin{equation}\label{Riemann}
\prod_{n=1}^\infty n=\sqrt{2\pi}.
\end{equation}
In \cite[Corollary 9.13]{KKS} the authors named its Riemann's formula, because it comes from Riemann's result in 1859:
\begin{equation}\label{Riemann2}
\zeta^{\prime}(0)=-\frac{1}{2}\log(2\pi).
\end{equation}

 Lerch himself extended (\ref{Lerch})  to the Gaussian quadratic field $\mathbb{Q}(i)$ as follows:
\begin{equation}\label{Lerch2}
\prod_{m=0}^\infty \left((m+x)^2+y^2\right)=\frac{2\pi}{\Gamma(x+iy)\Gamma(x-iy)}.
\end{equation}
Then in 2004 Kurokawa and Wakayama \cite{KW} proved the following generalization of (\ref{Lerch2}) to any cyclotomic fields $\mathbb{Q}(\zeta)$,
where $\zeta$ denotes the $n$th roots of unity:
\begin{equation}\label{KW}
\prod_{m=0}^{\infty}\left((m+x)^{n}-y^n\right)=\frac{(\sqrt{2\pi})^{n}}{\prod_{\zeta^{n}=1}\Gamma(x-\zeta y)}
\end{equation}
and in 2006, by applying Stark's summation formula \cite{Stark}, Mizuno \cite{Mizuno} got  a general form:
\begin{equation}\label{Mizuno}
\prod_{m=0}^{\infty}\left(\prod_{j=1}^{n}(m+z_{j})\right)=\frac{(\sqrt{2\pi})^{n}}{\prod_{j=1}^{n}\Gamma(z_{j})}
\end{equation}
for $z_{j}\in\mathbb{C}\setminus\{0,-1,-2, \ldots\}.$

\subsection{Our results}
In a recent preprint \cite{arXiv2}, we showed a close connection between (\ref{Riemann2}) and the following formula found by John Wallis in 1656:
\begin{equation}\label{Wallis} 
\frac{2\cdot2}{1\cdot 3}\frac{4\cdot4}{3\cdot 5}\frac{6\cdot6}{5\cdot 7}\cdots=\frac{\pi}{2}.
\end{equation}
 That  is, letting  $z\in\mathbb{C}\setminus\{0,-1,-2, \ldots\},$
 we showed that  
(\ref{Riemann2}) can be proved from the analytic properties for the Barnes' multiple zeta function:
\begin{equation}\label{M-Z}
\zeta_{N}(s,z)=\sum_{m_1,\ldots,m_N=0}^\infty
\frac{1}{(z+m_1+\cdots+m_N)^s},\quad\text{Re}(s)>N, 
\end{equation} 
while (\ref{Wallis}) is implied by its alternating form
\begin{equation}\label{M-Z-2}
\zeta_{E, N}(s,z)=\sum_{m_1,\ldots,m_N=0}^\infty
\frac{(-1)^{m_1+\cdots+m_N}}{(z+m_1+\cdots+m_N)^s},\quad\text{Re}(s)>0.
\end{equation} 

Let $\tilde\Gamma(z)$ denotes the modified gamma function, which is defined by the authors in \cite{arXiv3} from the  alternating Hurwitz zeta functions. In  subsection \ref{Sgamma} below,
we will give a brief review for its definition and properties by comparing with the ordinary gamma function $\Gamma(z)$.

In this note, inspiring by the above considerations, we extend Wallis' formula (\ref{Wallis}) as the following general form, which is an analogue of Mizuno's formula (\ref{Mizuno}) above.
\begin{theorem}\label{main theorem}
For $z_{j}\in\mathbb{C}\setminus\{0,-1,-2, \ldots\},$ we have
\begin{equation}\label{main}
\prod_{m=0}^{\infty}\left\{\prod_{j=1}^{n}(m+z_{j})\right\}^{(-1)^{m}}=\frac{\left(\sqrt{\frac{\pi}{2}}\right)^n}{\prod_{j=1}^{n}\tilde\Gamma(z_{j})}.
\end{equation}
 \end{theorem}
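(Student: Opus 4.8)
The plan is to read the left-hand side of \eqref{main} as a zeta-regularized product and to reduce it, via an even--odd splitting of the summation index, to Mizuno's non-alternating formula \eqref{Mizuno}. Attach to the sequence $b_{m}=\prod_{j=1}^{n}(m+z_{j})$ the alternating Dirichlet series
\[
Z_{E}(s)=\sum_{m=0}^{\infty}(-1)^{m}\Bigl(\prod_{j=1}^{n}(m+z_{j})\Bigr)^{-s},
\]
which converges for $\operatorname{Re}(s)>0$ because $b_{m}\sim m^{n}$ and the signs alternate. By the definition of the signed regularized product, the quantity to be computed is
\[
\prod_{m=0}^{\infty}\Bigl\{\prod_{j=1}^{n}(m+z_{j})\Bigr\}^{(-1)^{m}}=\exp\bigl(-Z_{E}'(0)\bigr),
\]
so the whole problem is the analytic continuation of $Z_{E}$ to a neighbourhood of $s=0$ and the evaluation of $Z_{E}'(0)$.

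First I would separate the even and odd terms. Writing $m=2k$ and $m=2k+1$ and pulling out $2^{-ns}$ gives
\[
Z_{E}(s)=2^{-ns}\bigl(\Sigma_{1}(s)-\Sigma_{2}(s)\bigr),\qquad \Sigma_{1}(s)=\sum_{k=0}^{\infty}\prod_{j=1}^{n}\Bigl(k+\tfrac{z_{j}}{2}\Bigr)^{-s},\quad \Sigma_{2}(s)=\sum_{k=0}^{\infty}\prod_{j=1}^{n}\Bigl(k+\tfrac{z_{j}+1}{2}\Bigr)^{-s}.
\]
Each of $\Sigma_{1},\Sigma_{2}$ is exactly the Dirichlet series underlying Mizuno's formula \eqref{Mizuno}, so they continue to $s=0$ and
\[
\Sigma_{i}'(0)=-\Bigl(\tfrac{n}{2}\log(2\pi)-\sum_{j=1}^{n}\log\Gamma(w^{(i)}_{j})\Bigr),\qquad w^{(1)}_{j}=\tfrac{z_{j}}{2},\ w^{(2)}_{j}=\tfrac{z_{j}+1}{2}.
\]
Differentiating the splitting and cancelling the $\tfrac{n}{2}\log(2\pi)$ terms between $\Sigma_{1}'(0)$ and $\Sigma_{2}'(0)$ yields
\[
Z_{E}'(0)=-(n\log 2)\,\bigl(\Sigma_{1}(0)-\Sigma_{2}(0)\bigr)+\sum_{j=1}^{n}\log\frac{\Gamma(z_{j}/2)}{\Gamma((z_{j}+1)/2)}.
\]

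It then remains to supply the special value $\Sigma_{1}(0)-\Sigma_{2}(0)=Z_{E}(0)=\tfrac12$, which I expect to extract from the continuation: the subleading terms in the expansion of $b_{m}^{-s}$ produce factors of $s$ against values $\eta(ns+r)$ of the entire Dirichlet eta function, which therefore drop out at $s=0$, leaving $\eta(0)=\tfrac12$. Granting this, $Z_{E}'(0)=-\tfrac{n}{2}\log 2+\sum_{j}\log\frac{\Gamma(z_{j}/2)}{\Gamma((z_{j}+1)/2)}$. Running the identical even--odd splitting on a single factor gives $\zeta_{E}(s,z)=2^{-s}\bigl(\zeta(s,z/2)-\zeta(s,(z+1)/2)\bigr)$, whence by the construction of $\t\Gamma$ from $\zeta_{E}$ in \cite{arXiv3}, $\exp(-\zeta_{E}'(0,z))=\sqrt{\pi/2}\,/\,\t\Gamma(z)$ with $\zeta_{E}'(0,z)=-\tfrac12\log 2+\log\frac{\Gamma(z/2)}{\Gamma((z+1)/2)}$. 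Comparing term by term shows $Z_{E}'(0)=\sum_{j=1}^{n}\zeta_{E}'(0,z_{j})$, so the regularized product factors and
\[
\prod_{m=0}^{\infty}\Bigl\{\prod_{j=1}^{n}(m+z_{j})\Bigr\}^{(-1)^{m}}=\prod_{j=1}^{n}\exp\bigl(-\zeta_{E}'(0,z_{j})\bigr)=\prod_{j=1}^{n}\frac{\sqrt{\pi/2}}{\t\Gamma(z_{j})}=\frac{\bigl(\sqrt{\pi/2}\bigr)^{n}}{\prod_{j=1}^{n}\t\Gamma(z_{j})},
\]
which is \eqref{main}.

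The step I expect to be the genuine obstacle is the rigorous continuation across $s=0$ together with the special value $Z_{E}(0)=\tfrac12$: the series for $Z_{E}$ involves the product $\prod_{j}(m+z_{j})$ rather than a single power, so neither termwise differentiation nor naive factorization is legitimate near $s=0$, and one must pin down the constant term carefully. This is exactly the place where Mizuno \cite{Mizuno} invoked Stark's summation formula \cite{Stark} in the non-alternating case; here the alternating structure is milder, since the relevant eta functions are entire with no pole at $s=1$ to track, so I would expect the even--odd reduction above, anchored on \eqref{Mizuno} and the definition of $\t\Gamma$, to be the cleanest way to make everything rigorous.
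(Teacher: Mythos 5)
Your argument is correct in outline but takes a genuinely different route from the paper. The paper does not quote Mizuno's result \eqref{Mizuno} as a black box; it reruns Mizuno's asymptotic-subtraction argument in the alternating setting (subtracting $m^{-ns}-s(\sum_j z_j)m^{-(ns+1)}$ termwise, continuing past $s=0$, evaluating via $\eta'(0)=\log\sqrt{\pi/2}$ and $\eta(1)=\tilde\gamma_0$, and finally recognizing the Weierstrass--Hadamard product \eqref{WH} of $\tilde\Gamma$). Your even--odd splitting $Z_E(s)=2^{-ns}(\Sigma_1(s)-\Sigma_2(s))$ (valid a priori only for $\operatorname{Re}(s)>1/n$, then extended by continuation) instead imports Mizuno's continuation and derivative at $s=0$ wholesale, which is shorter but leaves you two obligations. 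First, the special value $Z_E(0)=\Sigma_1(0)-\Sigma_2(0)=\tfrac12$: your heuristic is right (the point being that $\eta(ns+1)$ is entire, so the term $s(\sum_j z_j)\eta(ns+1)$ vanishes at $s=0$, unlike $s\,\zeta(ns+1)\to 1/n$ in the non-alternating case), but making it rigorous requires exactly the subtraction-of-asymptotics computation that constitutes the first half of the paper's proof, so you do not entirely escape that work. Second, the identity $\exp(-\zeta_E'(0,z))=\sqrt{\pi/2}\,/\,\tilde\Gamma(z)$ is literally the $n=1$ case of the theorem (Corollary \ref{Lerch type}), so citing it as "the construction of $\tilde\Gamma$" risks circularity unless you derive it independently; this can be done from the definition $\frac{d}{dz}\log\tilde\Gamma=\tilde\psi=-\zeta_E(1,\cdot)$ together with $\frac{d}{dz}\zeta_E'(0,z)=-\zeta_E(1,z)$ (differentiate $\partial_z\zeta_E(s,z)=-s\zeta_E(s+1,z)$ in $s$), which gives $\zeta_E'(0,z)=\log\tilde\Gamma(z)+C$, and the constant is pinned down at $z=1$ by Lemmas \ref{lemma1}(2) and \ref{lemma2}. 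With those two points filled in, your proof is valid, and it has the merit of exposing the clean structural identity $\zeta_E'(0,z)=-\tfrac12\log 2+\log\frac{\Gamma(z/2)}{\Gamma((z+1)/2)}$ (equivalently $\tilde\Gamma(z)=\tfrac{\sqrt\pi}{2}\Gamma(z/2)/\Gamma((z+1)/2)$), whereas the paper's self-contained computation directly exhibits the Weierstrass product and avoids any appeal to Mizuno's theorem or the duplication formula.
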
 
 For $n=1$, letting $z_{1}=x$ in (\ref{main}) we get the following analogue of Lerch's formula (\ref{Lerch}). 
 \begin{corollary}[Lerch type formula] \label{Lerch type}
 \begin{equation}\label{Lerch type}
\prod_{m=0}^\infty(m+x)^{(-1)^{m}}=\frac{\sqrt{\frac{\pi}{2}}}{\tilde\Gamma(x)}.
\end{equation}
\end{corollary}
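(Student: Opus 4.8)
The final statement is the case $n=1$, $z_{1}=x$ of Theorem \ref{main theorem}, so once \eqref{main} is established the corollary follows by direct substitution. Nonetheless the single-variable identity is the conceptual heart of the whole circle of results, and the cleanest route is to prove it on its own by zeta regularization; the general theorem then comes from it by additivity of the regularizing series over $j$.

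The plan is as follows. First I would read the left-hand side of \eqref{Lerch type} as the zeta-regularized product attached to the alternating Hurwitz zeta function
\[
\zeta_{E}(s,x)=\sum_{m=0}^{\infty}\frac{(-1)^{m}}{(m+x)^{s}},\qquad \text{Re}(s)>0.
\]
The decisive structural point --- and the reason the alternating setting is milder than in the setting of \eqref{Mizuno} --- is that $\zeta_{E}(s,x)$ continues to an \emph{entire} function of $s$: writing it as $2^{-s}\big(\zeta(s,x/2)-\zeta(s,(x+1)/2)\big)$, the residues of the two Hurwitz zeta functions at $s=1$ cancel, so there is no pole and $\zeta_{E}(s,x)$ is in particular regular at $s=0$. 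Hence the regularized product is well defined and equals $\exp(-\zeta_{E}'(0,x))$.

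The heart of the argument is then the identification
\[
\zeta_{E}'(0,x)=\log\tilde\Gamma(x)-\tfrac12\log\tfrac{\pi}{2},
\]
which is exactly the alternating analogue of the classical relation $\zeta'(0,x)=\log\big(\Gamma(x)/\sqrt{2\pi}\big)$ underlying Lerch's formula \eqref{Lerch}. This is precisely the link between $\zeta_{E}$ and the modified gamma function $\tilde\Gamma$ recalled in subsection \ref{Sgamma}, and I would take it from the definition and properties established in \cite{arXiv3}. Exponentiating gives $\exp(-\zeta_{E}'(0,x))=\sqrt{\pi/2}\,/\,\tilde\Gamma(x)$, which is \eqref{Lerch type}; feeding this single-factor result into the finite decomposition $\sum_{j}\zeta_{E}(s,z_{j})$ reproves Theorem \ref{main theorem}.

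The main obstacle is this single-variable step: pinning down the constant $\sqrt{\pi/2}$ (in place of Lerch's $\sqrt{2\pi}$) and verifying that $-\zeta_{E}'(0,x)$ really is the logarithm of $\sqrt{\pi/2}\,/\,\tilde\Gamma(x)$, since everything else is formal once regularity at $s=0$ is in hand. As a consistency check I would specialize $x=1$: then \eqref{Lerch type} should collapse to a value of $\tilde\Gamma(1)$ that recovers Wallis' product \eqref{Wallis}, confirming both the identity and the constant.
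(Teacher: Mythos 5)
Your opening sentence is exactly the paper's proof: the corollary is stated as the case $n=1$, $z_1=x$ of Theorem \ref{main theorem}, with no further argument. Your alternative, self-contained route is also essentially correct and is arguably cleaner for the single-variable case: for $n=1$ the auxiliary function $\Lambda^{*}(s)$ of Section 2 is exactly $-\zeta_E(s,x)$, the regularized product is $\exp(-\zeta_E'(0,x))$, and the entire content is the identity $\zeta_E'(0,x)=\log\tilde\Gamma(x)-\tfrac12\log\tfrac{\pi}{2}$. You defer that identity to \cite{arXiv3}, but it can be derived from ingredients already in the paper: $\partial_x\zeta_E(s,x)=-s\,\zeta_E(s+1,x)$ gives $\tfrac{d}{dx}\zeta_E'(0,x)=-\zeta_E(1,x)=\tilde\psi(x)$ by (\ref{ga-poly}) with $n=0$, so both sides have the same $x$-derivative, and the constant is pinned down at $x=1$ by Lemma \ref{lemma1}(2) ($\eta'(0)=\log\sqrt{\pi/2}$) together with Lemma \ref{lemma2} ($\tilde\Gamma(1)=\pi/2$). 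What this buys is a proof of the corollary that bypasses the Weierstrass--Hadamard product (\ref{WH}) and the truncation-at-$c$ bookkeeping entirely. One caveat: your closing claim that Theorem \ref{main theorem} then follows ``by additivity of the regularizing series over $j$'' is not automatic. The function $\Lambda^{*}(s)$ is \emph{not} equal to $-\sum_j\zeta_E(s,z_j)$ as a function of $s$; only the derivatives at $s=0$ agree, and proving that the zeta-regularized product of $\prod_j(m+z_j)$ splits multiplicatively with no anomaly term is precisely the work done in the paper's Section 2 (following Mizuno). That gap does not affect the corollary itself, which stands on either route.
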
  
So setting $x=1$ in (\ref{Lerch type}), by Lemma \ref{lemma2} below and taking squares on the both sides, we recover Wallis' formula (\ref{Wallis}).

For $n=2,$ letting $z_{1}=x+iy$ and $z_{2}=x-iy$ in (\ref{main}), we obtain an analogue of Lerch's formula  (\ref{Lerch2}).
\begin{corollary}[Lerch type formula in $\mathbb{Q}(i)$]
\begin{equation}\label{Lerch type 2}
\begin{aligned}
\prod_{m=0}^\infty \left((m+x)^2+y^2\right)^{(-1)^{m}}&=\prod_{m=0}^\infty\left((m+x+iy)(m+x-iy)\right)^{(-1)^{m}}\\
&=\frac{\frac{\pi}{2}}{\tilde\Gamma(x+iy)\tilde\Gamma(x-iy)}.
\end{aligned}
\end{equation}
\end{corollary}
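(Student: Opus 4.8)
The plan is to obtain this corollary directly from Theorem~\ref{main theorem} by specializing to $n=2$ with a complex-conjugate pair of parameters. Concretely, I would put $z_1=x+iy$ and $z_2=x-iy$ into (\ref{main}). Before doing so I would verify that this choice is admissible, i.e.\ that $x\pm iy\in\mathbb{C}\setminus\{0,-1,-2,\ldots\}$: for real parameters with $y\neq 0$ both numbers have nonzero imaginary part and hence cannot be nonpositive integers, while for $y=0$ one falls back on the standing hypothesis $x\notin\{0,-1,-2,\ldots\}$.

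Next I would record the elementary identity $(m+x+iy)(m+x-iy)=(m+x)^2+y^2$, valid for every integer $m\geq 0$. Because the factor attached to each index $m$ is literally the same in the two products appearing in (\ref{Lerch type 2}), the alternating zeta functions that regularize them,
\[
\sum_{m=0}^{\infty}(-1)^m\bigl((m+x+iy)(m+x-iy)\bigr)^{-s}=\sum_{m=0}^{\infty}(-1)^m\bigl((m+x)^2+y^2\bigr)^{-s},
\]
coincide for all $s$; hence the two zeta-regularized products are equal, which is exactly the first equality asserted in (\ref{Lerch type 2}).

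Finally I would evaluate the right-hand side of (\ref{main}) for this data. With $n=2$ it reads $\left(\sqrt{\frac{\pi}{2}}\right)^{2}/\bigl(\tilde\Gamma(x+iy)\tilde\Gamma(x-iy)\bigr)$, and since $\left(\sqrt{\frac{\pi}{2}}\right)^{2}=\frac{\pi}{2}$ this is precisely $\frac{\frac{\pi}{2}}{\tilde\Gamma(x+iy)\tilde\Gamma(x-iy)}$, completing the proof. As everything reduces to a substitution into the already-proved Theorem~\ref{main theorem}, there is no genuine analytic obstacle here; the only point that deserves attention is the admissibility check above, where the degenerate case $y=0$ is the one that actually invokes the hypothesis on $x$.
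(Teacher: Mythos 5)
Your proposal is correct and matches the paper's own (implicit) argument exactly: the corollary is obtained by substituting $n=2$, $z_1=x+iy$, $z_2=x-iy$ into Theorem \ref{main theorem} and using $(m+x+iy)(m+x-iy)=(m+x)^2+y^2$ together with $\left(\sqrt{\tfrac{\pi}{2}}\right)^2=\tfrac{\pi}{2}$. The admissibility check you add is a reasonable extra care not spelled out in the paper.
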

For $n$th roots of unity $\zeta$, letting $z_{j}=x-\zeta^{j}y, (j=0,1,\ldots, n-1)$ in (\ref{main}) we get the following analogue of 
Kurokawa and Wakayama's formula (\ref{KW}).
\begin{corollary}[Kurokawa--Wakayama type formula]
\begin{equation}\label{Lerch type 3}
\begin{aligned}
\prod_{m=0}^\infty\left((m+x)^{n}-y^n\right)^{(-1)^{m}}&=\prod_{m=0}^{\infty}\left\{\prod_{j=0}^{n-1}(m+x-\zeta^{j}y)\right\}^{(-1)^{m}}\\
&=\frac{\left(\sqrt{\frac{\pi}{2}}\right)^n}{\prod_{\zeta^{n}=1}\tilde\Gamma(x-\zeta y)}.
\end{aligned}
\end{equation}
\end{corollary}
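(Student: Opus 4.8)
The plan is to obtain this corollary as a direct specialization of Theorem \ref{main theorem}, combined with the classical factorization of $T^n - y^n$. First I would establish the inner (first) equality, which is purely algebraic. Writing $T = m+x$, the elementary factorization
\begin{equation*}
T^n - y^n = \prod_{\zeta^n = 1}(T - \zeta y)
\end{equation*}
holds because the right-hand side is a monic degree-$n$ polynomial in $T$ whose roots are exactly the numbers $\zeta y$, that is, the $n$ solutions of $T^n = y^n$. Substituting $T = m+x$ and fixing a primitive $n$-th root of unity $\zeta$ so that $\{\zeta^{j} : 0 \le j \le n-1\}$ enumerates all $n$-th roots of unity, this reads
\begin{equation*}
(m+x)^n - y^n = \prod_{j=0}^{n-1}(m + x - \zeta^{j} y)
\end{equation*}
for every fixed $m$. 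Since the identity holds term by term in $m$, the two zeta-regularized products in the first line of (\ref{Lerch type 3}) are built from identical sequences of factors and are therefore equal.

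For the second equality I would simply apply Theorem \ref{main theorem} with the choice $z_{j} = x - \zeta^{j} y$ for $j = 0, 1, \ldots, n-1$. The left-hand side of (\ref{main}) then becomes $\prod_{m=0}^{\infty}\{\prod_{j=0}^{n-1}(m + x - \zeta^{j} y)\}^{(-1)^{m}}$, which is exactly the product produced in the previous step, while the right-hand side becomes
\begin{equation*}
\frac{\left(\sqrt{\frac{\pi}{2}}\right)^n}{\prod_{j=0}^{n-1}\tilde\Gamma(x - \zeta^{j} y)} = \frac{\left(\sqrt{\frac{\pi}{2}}\right)^n}{\prod_{\zeta^{n} = 1}\tilde\Gamma(x - \zeta y)},
\end{equation*}
where the last step only re-indexes the product over all $n$-th roots of unity. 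This completes the identification with the right-hand side of (\ref{Lerch type 3}).

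The only point requiring genuine care, and the main (mild) obstacle, is the admissibility of the specialization: Theorem \ref{main theorem} requires $z_{j} \in \mathbb{C} \setminus \{0, -1, -2, \ldots\}$, so I must check that $x - \zeta^{j} y$ avoids the non-positive integers for every $n$-th root of unity $\zeta^{j}$. This is a constraint on the pair $(x,y)$ rather than a difficulty in the argument, and under it every step above is valid. Notably, no rearrangement or analytic continuation of the regularized products is needed beyond what is already furnished by Theorem \ref{main theorem}, precisely because the factorization of $(m+x)^n - y^n$ is an exact term-by-term identity rather than a limiting manipulation.
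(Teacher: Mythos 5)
Your proposal is correct and matches the paper's own derivation: the authors obtain this corollary precisely by setting $z_{j}=x-\zeta^{j}y$ for $j=0,1,\ldots,n-1$ in Theorem \ref{main theorem} and using the factorization $(m+x)^{n}-y^{n}=\prod_{j=0}^{n-1}(m+x-\zeta^{j}y)$. Your additional remark about checking that each $x-\zeta^{j}y$ avoids the non-positive integers is a point the paper leaves implicit, but it does not change the argument.
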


\subsection{Gamma function and Euler's constant}\label{Sgamma}
The Hurwitz zeta function 
\begin{equation}\label{Hurwitz}
\zeta(s,z)=\sum_{m=0}^\infty\frac{1}{(m+z)^{s}}\end{equation}
can be viewed as a source for several  special functions and mathematical constants.
Setting $z=1$ in (\ref{Hurwitz}), it reduces to the Riemann zeta function
\begin{equation}~\label{Ri-zeta}
\zeta(s)=\sum_{m=1}^{\infty}\frac{1}{m^{s}}.
\end{equation}

The generalized Stieltjes constant $\gamma_{k}(z)$ comes from the following  Laurent series expansion of $\zeta(s,z)$ around $s=1$
\begin{equation}\label{Stieltjes constant}
\zeta(s,z)=\frac{1}{s-1}+\sum_{k=0}^{\infty}\frac{(-1)^{k}\gamma_{k}(z)}{k!}(s-1)^{k}
\end{equation}
and $\gamma_{k}=\gamma_{k}(1)$ is the original Stieltjes constant in 1885 (see Stieltjes' original article \cite{St} and  Ferguson \cite{Fe}).
Letting $k=0$, we get Euler's constant
\begin{equation}
\begin{aligned}\gamma&:=\gamma_{0}(1)=\lim_{s\to1}\left(\zeta(s)-\frac{1}{s-1}\right)\\
&=\lim_{\alpha\to\infty}\left(\sum_{n=1}^{\alpha}\frac{1}{n}-\log \alpha\right)=0.5772156649\cdots.
\end{aligned}
\end{equation}

The gamma function $\Gamma(z)$ is defined by Euler from its integral representation 
\begin{equation}\label{Gamma} \Gamma(z)=\int_{0}^{\infty}t^{z-1}e^{-t}dt, \quad\textrm{Re}(z)>0,\end{equation}
 but it can also be defined from the derivatives of $\zeta(s,z)$ as follows
(e.g., \cite[Definition 9.6.13(1)]{Cohen}),
\begin{equation}\Gamma(z)=\exp\left(\zeta'(0,z)-\zeta'(0,1)\right)=\exp\left(\zeta'(0,z)-\zeta'(0)\right).\end{equation}
The following Weierstrass--Hadamard product  representation of $\Gamma(z)$ is well-known:
\begin{equation}\label{Hadamard}
\Gamma(z)=\frac{1}{z}e^{-\gamma z} \prod_{m=1}^{\infty}\left(e^{\frac{z}{m}}\left(1+\frac{z}{m}\right)^{-1}\right),
\end{equation}
where $\gamma$ is  Euler's constant.

Then the digamma functions can be defined from the derivatives  of the log gamma functions $\log\Gamma(z),$
that is,  \begin{equation}\label{Classical2} \psi(z):=\frac{d}{dz}\log\Gamma(z)\end{equation}
and more generally
 \begin{equation}\psi^{(n)}(z):=\left(\frac{d}{dz}\right)^n\psi(z),\quad n=0,1,2,\ldots\end{equation}
 (see \cite[p. 33]{SC}), 
and \cite[p. 33, Eq. (53)]{SC} shows that
\begin{equation}\label{Classical} \psi^{(n)}(z)=(-1)^{n+1}n!\zeta(n+1,z), \quad n = 1,2,\ldots
\end{equation}
(also see \cite[Proposition 9.6.41]{Cohen}).

Now we go to the alternating case. For details, we refer to \cite{arXiv3}. 
Let 
\begin{equation}\label{AHurwitz}
\zeta_{E}(s,z)=\sum_{m=0}^\infty\frac{(-1)^{m}}{(m+z)^{s}}\end{equation}
be the alternating Hurwitz zeta function.
Setting $z=1$ in (\ref{AHurwitz}), it reduces to Dirichlet's eta function
\begin{equation}~\label{Ezeta}
\eta(s)=\sum_{m=1}^{\infty}\frac{(-1)^{m+1}}{m^{s}}.
\end{equation}
According to Weil's history~\cite[p.~273--276]{Weil} (also see a survey by Goss~\cite[Section 2]{Goss}),
Euler used (\ref{Ezeta}) to ``prove"
\begin{equation}~\label{fe}
\frac{\eta(1-s)}{\eta(s)}=-\frac{\Gamma(s)(2^{s}-1)\textrm{cos}(\pi s/2)}{(2^{s-1}-1)\pi^{s}},
\end{equation}
which leads to the functional equation of the Riemann zeta function $\zeta(s)$.

As a result of analytic continuation, we see that $\zeta_E(s,z)$ is non-singular at $s=1.$  Thus we can  designate  a modified Stieltjes constant $\tilde\gamma_k(z)$ from
the Taylor expansion of $\zeta_E(s,z)$ at $s=1$,
\begin{equation}\label{l-s-con}
\zeta_E(s,z)=\sum_{k=0}^\infty\frac{(-1)^k\tilde\gamma_k(z)}{k!}(s-1)^k.
\end{equation}
In analogy with the classical case (\ref{Stieltjes constant}),  $\tilde\gamma_{k}=\tilde\gamma_{k}(1)$ is named the modified Stieltjes constant.
Letting $k=0$, we get the modified Euler  constant
\begin{equation}\label{gamma0}
\tilde\gamma_{0}:=\tilde\gamma_{0}(1)=\frac12+\frac12\sum_{j=1}^\infty(-1)^{j+1}\frac{1}{j(j+1)}.
 \end{equation}
 (See \cite[p. 4]{arXiv3}).
 
Following \cite[Proposition 2]{WZ},  the modified digamma function $\tilde\psi(z)$ is defined to be
\begin{equation}\label{psi-def}
\tilde\psi(z):=-\tilde\gamma_0(z).
\end{equation}
or equivalently 
\begin{equation}\label{ps-ga}
\begin{aligned}
\tilde\psi(z)&=-\frac{\Gamma'(z)}{\Gamma(z)}+\frac{\Gamma'(z/2)}{\Gamma(z/2)}+\log2
\\&=-\psi(z)+\psi(z/2)+\log2,
\end{aligned}
\end{equation}
where $\Gamma$ is the gamma function and  $\psi$ is the digamma function.
Let  \begin{equation}\label{poly-ga-def}
\t\psi^{(n)}(z):=\left(\frac{d}{dz}\right)^n\t\psi(z),\quad n=0,1,2,\ldots.
\end{equation}
As in the classical situation (\ref{Classical}), we have the following representation
\begin{equation}\label{ga-poly}
\t\psi^{(n)}(z)=(-1)^{n+1}n!\zeta_E(n+1,z), \quad n = 0,1,2,\ldots
\end{equation}
(see \cite[p. 957, 8.374]{GR}).

Inspiring by the classical formula (\ref{Classical2}), we define the modified gamma function $\tilde\Gamma(z)$ from the differential equation 
\begin{equation}
\tilde\psi(z)=\frac{d}{dz}\log\tilde\Gamma(z),\quad\textrm{Re}(z)>0
\end{equation}
and the following   analogue of the Weierstrass--Hadamard product (\ref{Hadamard}) has been shown in \cite[Theorem 1.12]{arXiv3}:
\begin{equation}\label{WH} 
\tilde\Gamma(z)=\frac1z e^{\tilde\gamma_0 z}\prod_{m=1}^\infty\left(e^{-\frac zm}\left(1+\frac zm\right)\right)^{(-1)^{m+1}},\end{equation}
where $\tilde\gamma_0$ is the modified Euler constant (see (\ref{gamma0})). 

The following two lemmas on the properties of the Dirichlet's eta function $\eta(s)$ and the modified gamma function $\tilde\Gamma(z)$ shall be used
in the proof of the main result.
\begin{lemma}\label{lemma1}   
\begin{itemize}
 \item [(1)] 
 \begin{equation}
\eta(1)=\tilde\gamma_{0}.
\end{equation}
 \item [(2)] 
 \begin{equation}
\eta^{\prime}(0)=\log\sqrt{\frac{\pi}{2}}.
\end{equation}\end{itemize}
\end{lemma}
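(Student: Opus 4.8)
The plan is to prove the two parts separately, in each case reducing the eta function to objects whose behaviour near the relevant point is already understood. The common observation is that comparing the defining series (\ref{Ezeta}) and (\ref{AHurwitz}) gives, after the reindexing $k=m+1$, the identity $\eta(s)=\zeta_E(s,1)$; this links $\eta$ to the alternating Hurwitz zeta function for part (1), while the classical factorization $\eta(s)=(1-2^{1-s})\zeta(s)$ links it to the ordinary zeta function for part (2).

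For part (1), I would invoke the regularity of $\zeta_E(s,z)$ at $s=1$ (noted earlier in the text), which makes the Taylor expansion (\ref{l-s-con}) valid there. Substituting $s=1$ and $z=1$ kills every term except the constant one, so that $\eta(1)=\zeta_E(1,1)=\tilde\gamma_0(1)=\tilde\gamma_0$. As a consistency check one can also compute both sides explicitly: $\eta(1)=1-\tfrac12+\tfrac13-\cdots=\log 2$, and applying the partial-fraction identity $\tfrac{1}{j(j+1)}=\tfrac1j-\tfrac1{j+1}$ to (\ref{gamma0}) telescopes the alternating series to $2\log 2-1$, whence $\tilde\gamma_0=\tfrac12+\tfrac12(2\log2-1)=\log 2$ as well.

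For part (2), the key step is to differentiate the factorization $\eta(s)=(1-2^{1-s})\zeta(s)$, which itself follows by splitting $\sum m^{-s}$ into its odd and even parts. A direct computation gives $\eta'(s)=2^{1-s}(\log 2)\zeta(s)+(1-2^{1-s})\zeta'(s)$, so that
\begin{equation*}
\eta'(0)=2(\log 2)\,\zeta(0)-\zeta'(0).
\end{equation*}
I would then insert the classical value $\zeta(0)=-\tfrac12$ together with Riemann's evaluation $\zeta'(0)=-\tfrac12\log(2\pi)$ from (\ref{Riemann2}), obtaining $\eta'(0)=-\log 2+\tfrac12\log(2\pi)=\tfrac12\log\tfrac{2\pi}{4}=\tfrac12\log\tfrac{\pi}{2}=\log\sqrt{\tfrac{\pi}{2}}$.

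The argument is routine throughout and presents no genuine obstacle; the only point demanding a little care is the final bookkeeping of logarithms, where the term $-\log 2$ produced by the factor $2^{1-s}$ must be written as $-\tfrac12\log 4$ and absorbed into $\tfrac12\log(2\pi)$ to halve the argument to $\pi/2$. Conceptually this is precisely the mechanism by which the alternating factor $(1-2^{1-s})$ transforms Riemann's constant $\tfrac12\log(2\pi)$ into its Wallis analogue $\log\sqrt{\pi/2}$, which is the source of the $\sqrt{\pi/2}$ appearing throughout the main theorem.
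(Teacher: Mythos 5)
Your proposal is correct and follows essentially the same route as the paper: part (1) by evaluating the Taylor expansion of $\zeta_E(s,z)$ at $s=1$, $z=1$, and part (2) by differentiating the factorization $\eta(s)=(1-2^{1-s})\zeta(s)$ and inserting $\zeta(0)=-\tfrac12$ and $\zeta'(0)=-\tfrac12\log(2\pi)$. The explicit computation showing both sides of (1) equal $\log 2$ is a nice extra check not present in the paper, but the substance of the argument is identical.
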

\begin{proof} (1) Setting $s=1$ and $z=1$ in (\ref{l-s-con}), by (\ref{Ezeta}) and (\ref{gamma0}) we \begin{equation*} 
\eta(1)=\zeta_E(1,1)=\tilde\gamma_{0}(1)=\tilde\gamma_{0}.
\end{equation*}

(2) Since  $$\eta(s)=\left(1-2^{1-s}\right)\zeta(s),$$
by taking the derivatives on the both sides of the above equality and noticing that 
$$\zeta(0)=-\frac{1}{2}$$ (\cite[Theorem 12.16]{Apostol}) and  $$\zeta^{\prime}(0)=-\frac{1}{2}\log(2\pi),$$
we get $$\eta^{\prime}(0)=\log\sqrt{\frac{\pi}{2}},$$
which is what we want.\end{proof}

\begin{lemma}\label{lemma2}
 $$\tilde\Gamma(1)=\frac{\pi}{2}.$$ 
 \end{lemma}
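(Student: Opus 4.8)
The plan is to reduce $\tilde\Gamma$ to the ordinary gamma function via the duplication-type relation (\ref{ps-ga}), and then read off the value at $z=1$ from the classical special values $\Gamma(1)=1$ and $\Gamma(1/2)=\sqrt\pi$. First I would integrate (\ref{ps-ga}): since $\tilde\psi(z)=\frac{d}{dz}\log\tilde\Gamma(z)$, $\psi(z)=\frac{d}{dz}\log\Gamma(z)$ and $\psi(z/2)=2\frac{d}{dz}\log\Gamma(z/2)$, the identity $\tilde\psi(z)=-\psi(z)+\psi(z/2)+\log2$ integrates on $\mathrm{Re}(z)>0$ to
\begin{equation*}
\log\tilde\Gamma(z)=-\log\Gamma(z)+2\log\Gamma(z/2)+z\log2+C
\end{equation*}
for some constant $C$, i.e. $\tilde\Gamma(z)=e^{C}\,2^{z}\,\Gamma(z/2)^{2}/\Gamma(z)$.

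Next I would simplify the right-hand side with Legendre's duplication formula $\Gamma(z)=\frac{2^{z-1}}{\sqrt\pi}\Gamma(z/2)\Gamma\left(\frac{z+1}{2}\right)$, which collapses the expression to
\begin{equation*}
\tilde\Gamma(z)=2e^{C}\sqrt\pi\,\frac{\Gamma(z/2)}{\Gamma\left(\frac{z+1}{2}\right)}.
\end{equation*}
It then remains only to pin down $C$, and for this I would use the normalization built into the Weierstrass--Hadamard product (\ref{WH}): letting $z\to0$ there, $e^{\tilde\gamma_0 z}\to1$ and every factor of the infinite product tends to $1$, so $\tilde\Gamma(z)\sim 1/z$. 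Comparing with the asymptotics $\Gamma(z/2)\sim2/z$ and $\Gamma\left(\frac{z+1}{2}\right)\to\Gamma(1/2)=\sqrt\pi$ in the displayed formula gives $\tilde\Gamma(z)\sim 4e^{C}/z$, whence $e^{C}=1/4$ and
\begin{equation*}
\tilde\Gamma(z)=\frac{\sqrt\pi}{2}\,\frac{\Gamma(z/2)}{\Gamma\left(\frac{z+1}{2}\right)}.
\end{equation*}
Evaluating at $z=1$ then yields $\tilde\Gamma(1)=\frac{\sqrt\pi}{2}\,\Gamma(1/2)/\Gamma(1)=\frac{\sqrt\pi}{2}\cdot\sqrt\pi=\frac{\pi}{2}$, as claimed.

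I expect the determination of the constant $C$ to be the only real obstacle, since the differential equation and (\ref{ps-ga}) fix $\tilde\Gamma$ only up to a multiplicative constant; the $z\to0$ matching against (\ref{WH}) is exactly what makes the argument rigorous. (An essentially equivalent route would use $\log\tilde\Gamma(z)=\zeta_E'(0,z)+C$, which follows from $\frac{\partial}{\partial z}\zeta_E'(0,z)=-\zeta_E(1,z)=-\tilde\gamma_0(z)=\tilde\psi(z)$, together with $\zeta_E'(0,1)=\eta'(0)=\log\sqrt{\pi/2}$ from Lemma~\ref{lemma1}(2); but it still requires fixing $C$ through the same normalization.) I would deliberately avoid the tempting shortcut of setting $z=1$ directly in (\ref{WH}): there the exponential factors cancel via Lemma~\ref{lemma1}(1) (since $\eta(1)=\tilde\gamma_0$), leaving $\prod_{m\ge1}(1+1/m)^{(-1)^{m+1}}$, which is precisely Wallis' product, so invoking (\ref{Wallis}) at this stage would be circular---the whole point being to recover (\ref{Wallis}) afterwards from the Lerch-type formula (\ref{Lerch type}) together with Lemma~\ref{lemma2}.
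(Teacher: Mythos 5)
Your argument is correct, and it takes a genuinely different route from the paper. The paper sets $z=1$ directly in the logarithm of the Weierstrass--Hadamard product (\ref{WH}), uses $\eta(1)=\tilde\gamma_0$ to cancel the exponential terms, and then recognizes the surviving product $\prod_{k\ge1}\bigl(k/(k+1)\bigr)^{(-1)^k}$ as Wallis' product, quoting (\ref{Wallis}) to conclude --- exactly the shortcut you flag as circular-looking. (The authors are aware of this and defuse it by remarking that (\ref{Wallis}) has an independent derivation from the alternating multiple Hurwitz zeta functions in \cite[Section 3]{arXiv2}, so strictly speaking their proof is not circular, only dependent on that external input.) Your route instead integrates (\ref{ps-ga}), applies Legendre duplication to obtain the closed form $\tilde\Gamma(z)=\frac{\sqrt\pi}{2}\,\Gamma(z/2)/\Gamma\bigl(\frac{z+1}{2}\bigr)$, and fixes the integration constant by matching the $z\to0$ normalization $\tilde\Gamma(z)\sim1/z$ from (\ref{WH}); this constant-fixing step is done correctly (each factor of the product has logarithm $O(z^2/m^2)$, so the product tends to $1$). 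What your approach buys is substantial: it proves a global identity for $\tilde\Gamma$ rather than a single special value, it makes no appeal to Wallis' formula, and consequently the later recovery of (\ref{Wallis}) from Corollary \ref{Lerch type} becomes a genuinely new proof of Wallis rather than a round trip. The trade-off is that it imports (\ref{ps-ga}) and the duplication formula, whereas the paper's proof needs only the product representation and Lemma \ref{lemma1}(1).
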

 \begin{proof}
By \cite[p. 19, Eq. (2.31)]{arXiv3}, we have
\begin{equation}\label{lgamma} 
\log\t\Gamma(z)=-\log z+\t\gamma_0 z+\sum_{k=1}^\infty(-1)^{k}\left(\frac zk-\log\left(1+\frac zk\right)\right).
\end{equation}
Letting $z=1$ in (\ref{lgamma}), we see
\begin{equation}\label{lgamma2} 
\log\t\Gamma(1)=\t\gamma_0+\sum_{k=1}^\infty(-1)^{k}\left(\frac 1k-\log\left(1+\frac 1k\right)\right).
\end{equation}
By Lemma \ref{lemma1}(1) we have  $$\sum_{k=1}^\infty\frac{(-1)^{k}}{k}=-\eta(1)=-\t\gamma_{0},$$
so  (\ref{lgamma2}) implies
\begin{equation}\label{lgamma3} 
\log\t\Gamma(1)=\log\prod_{k=1}^{\infty}\left(\frac{k}{k+1}\right)^{(-1)^{k}}.
\end{equation} 
From Wallis' formula (\ref{Wallis}) we have
\begin{equation}\label{Wallis2}
\prod_{k=1}^{\infty}\left(\frac{k}{k+1}\right)^{(-1)^{k}}=\frac{\pi}{2}.
\end{equation}
Here it may be  necessary  to mention that  \cite[Section 3]{arXiv2} points out that Wallis  formula can be derived from the alternating multiple Hurwitz zeta functions (\ref{M-Z-2}) directly.
Substituting (\ref{Wallis2}) into (\ref{lgamma3}), 
we get $$ \log\t\Gamma(1)=\log \frac{\pi}{2}$$
and  $$\t\Gamma(1)=\frac{\pi}{2}.$$
which is what we want.
\end{proof}

\section{Proof of the main result}
In this section, we prove Theorem \ref{main theorem} by modifying the method of Mizuno \cite{Mizuno}.
The Weiestrass-Hadamard product of the modified gamma function $\tilde\Gamma(z)$ (\ref{WH}) will play
a key role in our approach.

Let $c\in\mathbb{N}_{0}=\mathbb{N}\cup\{0\}. $ Define
\begin{equation}
\Lambda_{c}^{*}(s)=\sum_{m=c+1}^{\infty} (-1)^{m+1} \prod_{j=1}^{n}(m+z_{j})^{-s},\quad\textrm{Re}(s)>0.
\end{equation}
It is easy to see that
\begin{equation}\label{(5)}
\begin{aligned}
&\quad\Lambda_{c}^{*}(s)-\sum_{m=c+1}^{\infty} (-1)^{m+1}m^{-ns}+s\left(\sum_{j=1}^{n}z_{j}\right)\sum_{m=c+1}^{\infty}(-1)^{m+1}m^{-(ns+1)}\\
&=\sum_{m=c+1}^{\infty} (-1)^{m+1}m^{-ns}\left\{\prod_{j=1}^{n}\left(1+\frac{z_{j}}{m}\right)^{-s}-1+s\left(\sum_{j=1}^{n}z_{j}\right)\frac{1}{m}\right\}.
\end{aligned}
\end{equation}
For $c\in\mathbb{N}$ big enough, we have $|\frac{z_{j}}{m}|<1$ for any $m\geq c+1$ and $j=1,2,\ldots,n$.
From the binomial theorem, 
\begin{equation}
\left(1+\frac{z_{j}}{m}\right)^{-s}=1-s\frac{z_{j}}{m}+O\left(\frac{1}{m^{2}}\right),
\end{equation}
so the right hand side of (\ref{(5)}) converges absolutely and uniformly on compact subset of 
$\{s\in\mathbb{C}: \textrm{Re}(s)>-\frac{1}{n}\}.$

Denote by 
\begin{equation}
\Lambda^{*}(s)=\sum_{m=0}^{\infty} (-1)^{m+1} \prod_{j=1}^{n}(m+z_{j})^{-s},\quad\textrm{Re}(s)>0.
\end{equation}
We have 
\begin{equation}\label{(6)}
\begin{aligned}
\Lambda^{*}(s)&=\sum_{m=0}^{c}(-1)^{m+1} \prod_{j=1}^{n}(m+z_{j})^{-s}+\Lambda_{c}^{*}(s)\\
&=\sum_{m=0}^{c}(-1)^{m+1} \prod_{j=1}^{n}(m+z_{j})^{-s}\\
&\quad+\sum_{m=c+1}^{\infty} (-1)^{m+1}m^{-ns}-s\left(\sum_{j=1}^{n}z_{j}\right)\sum_{m=c+1}^{\infty}(-1)^{m+1}m^{-(ns+1)}\\
&\quad+\sum_{m=c+1}^{\infty} (-1)^{m+1}m^{-ns}\left\{\prod_{j=1}^{n}\left(1+\frac{z_{j}}{m}\right)^{-s}-1+s\left(\sum_{j=1}^{n}z_{j}\right)\frac{1}{m}\right\}\\
&=\sum_{m=0}^{c}(-1)^{m+1} \prod_{j=1}^{n}(m+z_{j})^{-s}
+\left(\eta(ns)-\sum_{m=1}^{c} (-1)^{m+1}m^{-ns}\right)\\
&\quad -s\left(\sum_{j=1}^{n}z_{j}\right)\left(\eta(ns+1)-\sum_{m=1}^{c}(-1)^{m+1}m^{-(ns+1)}\right)\\
&\quad+\sum_{m=c+1}^{\infty} (-1)^{m+1}\left\{\prod_{j=1}^{n}\left(m+z_{j}\right)^{-s}-m^{-ns}+s\left(\sum_{j=1}^{n}z_{j}\right)m^{-(ns+1)}\right\}.
\end{aligned}
\end{equation}
Thus taking the derivatives on the both sides of the above equality, we have
\begin{equation}\label{(7)}
\begin{aligned}
\frac{\partial}{\partial s} \Lambda^{*}(s)\bigg|_{s=0}
&=-\sum_{m=0}^{c}(-1)^{m+1} \sum_{j=1}^{n}\log(m+z_{j})
\\&\quad+n\eta^{\prime}(0)-\sum_{m=1}^{c} (-1)^{m+1}(-n)\log m\\
&\quad-\left(\sum_{j=1}^{n}z_{j}\right)\eta(1)+\left(\sum_{j=1}^{n}z_{j}\right)\sum_{m=1}^{c}(-1)^{m+1}\frac{1}{m}\\
&\quad+\sum_{m=c+1}^{\infty} (-1)^{m+1}\left\{-\sum_{j=1}^{n}\log(m+z_{j})+n\log m+\left(\sum_{j=1}^{n}z_{j}\right)\frac{1}{m}\right\}\\
&=-\sum_{m=0}^{c}(-1)^{m+1} \sum_{j=1}^{n}\log(m+z_{j})+\sum_{j=1}^{n}\sum_{m=1}^{c}(-1)^{m+1} \left(\log m+\frac{z_{j}}{m}\right)\\
&\quad-\sum_{j=1}^{n}\sum_{m=c+1}^{\infty}(-1)^{m+1}\left\{\log\left(1+\frac{z_{j}}{m}\right)-\frac{z_{j}}{m}\right\}\\
&\quad+n\eta^{\prime}(0)-\left(\sum_{j=1}^{n}z_{j}\right)\eta(1).
\end{aligned}
\end{equation}
By Lemma \ref{lemma1} $\eta^{\prime}(0)=\log\sqrt{\frac{\pi}{2}}$ and $\eta(1)=\t\gamma_{0},$ we have
\begin{equation}\label{(8)}
\begin{aligned}
\frac{\partial}{\partial s} \Lambda^{*}(s)\bigg|_{s=0}
&=-\sum_{m=0}^{c}(-1)^{m+1} \sum_{j=1}^{n}\log(m+z_{j})\\
&\quad+\sum_{j=1}^{n}\sum_{m=1}^{c}(-1)^{m+1} \left(\log m+\frac{z_{j}}{m}\right)\\
&\quad-\sum_{j=1}^{n}\sum_{m=c+1}^{\infty}(-1)^{m+1}\left\{\log\left(1+\frac{z_{j}}{m}\right)-\frac{z_{j}}{m}\right\}\\
&\quad+n\log\sqrt{\frac{\pi}{2}}-\left(\sum_{j=1}^{n}z_{j}\right)\t\gamma_{0}.
\end{aligned}
\end{equation}
On the other hand,
since 
\begin{equation}
\Lambda^{*}(s)=\sum_{m=0}^{\infty} (-1)^{m+1} \prod_{j=1}^{n}(m+z_{j})^{-s},~\textrm{Re}(s)>0,
\end{equation}
 by taking the derivatives on the both sides directly,
 we have 
\begin{equation}\label{(9)}
\begin{aligned}
\frac{\partial}{\partial s} \Lambda^{*}(s)\bigg|_{s=0}
&=-\sum_{m=0}^{\infty}(-1)^{m+1} \sum_{j=1}^{n}\log(m+z_{j})\\
&=\sum_{m=0}^{\infty}\log\prod_{j=1}^{n}(m+z_{j})^{(-1)^{m}}.
\end{aligned}
\end{equation} 
Then comparing (\ref{(8)}) and (\ref{(9)}) we have
 \begin{equation}\label{(10)}
\begin{aligned}
\prod_{m=0}^{\infty}\left\{\prod_{j=1}^{n}(m+z_{j})\right\}^{(-1)^{m}}
&= \exp\left(\frac{\partial}{\partial s}\Lambda^{*}(s)\bigg|_{s=0}\right)\\
&=\prod_{j=1}^{n}\left\{\prod_{m=0}^{c}(m+z_{j})\right\}^{(-1)^{m}}\prod_{j=1}^{n}\prod_{m=1}^{c}m^{(-1)^{m+1}}\prod_{j=1}^{n}\prod_{m=1}^{c}e^{(-1)^{m+1}\frac{z_{j}}{m}}\\
&\quad\times\prod_{j=1}^{n}\left\{\prod_{m=c+1}^{\infty}\left(1+\frac{z_{j}}{m}\right)\right\}^{(-1)^{m}}\prod_{j=1}^{n}\prod_{m=c+1}^{\infty}e^{(-1)^{m+1}\frac{z_{j}}{m}}\\
&\quad\times\left(\sqrt{\frac{\pi}{2}}\right)^n\times   e^{\left(\sum_{j=1}^{n}z_{j}\right)(-\t\gamma_{0})}\\
&=\prod_{j=1}^{n} z_{j} \times \prod_{j=1}^{n}\left\{\prod_{m=1}^{c}(m+z_{j})\right\}^{(-1)^{m}}\\
&\quad\times\prod_{j=1}^{n}\prod_{m=1}^{c}m^{(-1)^{m+1}}\prod_{j=1}^{n}\prod_{m=1}^{c}e^{(-1)^{m+1}\frac{z_{j}}{m}}\\
&\quad\times\prod_{j=1}^{n}\left\{\prod_{m=c+1}^{\infty}\left(1+\frac{z_{j}}{m}\right)\right\}^{(-1)^{m}}\prod_{j=1}^{n}\prod_{m=c+1}^{\infty}e^{(-1)^{m+1}\frac{z_{j}}{m}}\\
&\quad\times\left(\sqrt{\frac{\pi}{2}}\right)^n\times   e^{\left(\sum_{j=1}^{n}z_{j}\right)(-\t\gamma_{0})}\\
&=\left(\sqrt{\frac{\pi}{2}}\right)^n
\prod_{j=1}^{n}\left\{ z_{j}e^{-\t\gamma_{0}z_{j}}\prod_{m=1}^\infty\left(e^{-\frac{z_{j}}{m}}\left(1+\frac{z_{j}}{m}\right)\right)^{(-1)^{m}}\right\}.\end{aligned}
\end{equation}  
Finally from the Weierstrass--Hadamard product representation of $\t\Gamma(z)$ (\ref{WH}) we get
\begin{equation}\label{main-proof}
\prod_{m=0}^{\infty}\left\{\prod_{j=1}^{n}(m+z_{j})\right\}^{(-1)^{m}}=\frac{\left(\sqrt{\frac{\pi}{2}}\right)^n}{\prod_{j=1}^{n}\tilde\Gamma(z_{j})},
\end{equation}
which is what we want.

\end{document}